\def\ZZ{\mathbb{Z}}
\def\rr{\mathsf{r}}
\def\ee{{\mathsf{e}_\lambda}}
\def\gp{{\mathsf{g}_p}}
\def\EE{\mathcal{E}}
\def\PP{\mathbb{P}}
\def\geom{\mathrm{geom}}
\def\rr{\mathsf{r}}
\def\GG{\mathcal{G}}
\def\MM{\mathcal{M}}
\def\BB{\mathcal{B}}
\def\WW{\mathbb{W}}
\def\FF{\mathcal{F}}
\theoremstyle{definition}
\newtheorem{definition}{Definition}
\theoremstyle{theorem}
\newtheorem{proposition}[definition]{Proposition}
\newtheorem{theorem}[definition]{Theorem}
\newtheorem{corollary}[definition]{Corollary}
\numberwithin{equation}{section}
\numberwithin{definition}{section}
\theoremstyle{remark}
\newtheorem{remark}[definition]{Remark}
\newtheorem{example}[definition]{Example}
\begin{document}
\title{Symmetric splitting of  one-dimensional noises}

\author{Matija Vidmar}
\address{Department of Mathematics, Faculty of Mathematics and Physics, University of Ljubljana, Slovenia}
\email{matija.vidmar@fmf.uni-lj.si}

\begin{abstract}A symmetric random walk $X$ whose jumps have diffuse law, looked at up to an independent geometric random time,  splits at the minimum into two independent and identically distributed pieces. The same for the maximum. It is natural to ask, are there any other times adapted to $X$ exhibiting this  ``symmetric splitting''? It appears that the phenomenon is most conveniently couched in terms of (what may be called) the  noise structure of $X$. At the level of generality of the latter, an equivalent set-theoretic condition for the symmetric splitting  property is provided, leading to the observation that the answer to the elucidated question is to the affirmative. While we do not deal much with the obvious analog of the phenomenon in continuous time, the discrete findings do beg the  question: does \emph{linear Brownian motion} admit times of symmetric splitting other than the maxima and minima? This is left unresolved, but we do make some comments as to why it may be non-trivial/interesting.
\end{abstract}

\thanks{Financial support from the Slovenian Research and Innovation Agency (ARIS) under programme No. P1-0448 is acknowledged. The author   thanks Jon Warren for many insightful discussions concerning the topic of this paper.}

\keywords{Path-splitting; random walk; independence; equality in distribution; Brownian motion}

\date{\today}

\subjclass[2020]{Primary: 60G50. Secondary: G0G20, 60G51.} 

\maketitle

\section{Introduction}

\subsection{Motivation, connections to existing literature, scope of results}
It is a classical wisdom, part of the  conglomerate of results falling under the umbrella of the so-called Wiener-Hopf factorization, that a symmetric random walk $X$, with values on the real line and diffuse law of its jumps, splits at the time $\tau^0_\mathsf{g}$ of the minimum  (or maximum) before an independent geometric random time $\mathsf{g}$ into two independent and identically distributed pieces: the increments of $X$ before $\tau^0_\mathsf{g}$ (looked backwards) together with $ \tau^0_\mathsf{g}$, and the increments of $X$ after $\tau^0_\mathsf{g}$ together with $\mathsf{g}- \tau^0_\mathsf{g}$. A similar result holds true in the case of L\'evy processes if we make suitable qualifications to preclude a jump at the a.s. unique supremum/infimum and if we substitute for $\mathsf{g}$ an independent exponential random time $\mathsf{e}$ \cite[Lemma~VI.6]{bertoin} \cite{greenwood-pitman}. 

The purpose of  this short note is to investigate the natural generalization of the phenomenon just delineated. Namely, given a symmetric real-valued process with stationary independent increments (PSII) $X$, we will be interested in the structure of times $\tau$ having the above-enunciated property when substituted for $\tau^0$, to be referred to henceforth as  ``symmetric splitting'' for $X$ by $\tau$. In particular, we want to know whether this property is special to the maxima and minima of $X$ or not. We restrict attention to symmetric  $X$, i.e. those for which $X$ and $-X$ have the same law, because by duality it is only for them that time-reversal is measure-preserving \cite[Lemma~3.4]{kyprianou}.

As it happens, in discrete time we shall be able to provide a set-theoretic (up to a.s. equality, of course) condition for a family of adapted times $\tau=(\tau_n)_{n\in \mathbb{N}_0}$ to be symmetric splitting for $X$. Here, for $n\in \mathbb{N}_0$, the reader should think of $\tau_n$ as being the generalization of the time $\tau_n ^0$ when $X$ attains its minimum on $[0,n]$. By adaptedness we intend simply that $\tau_n$ takes its values in $[0,n]\cap \mathbb{Z}$ and is recoverable from the path of $X$ up to time $n$. It will be found that $\tau$ is symmetric splitting if and only if, in a sense that we shall make precise, $\tau_n$ selects that unique time point $k$ in $[0,n]\cap \mathbb{Z}$ for which the increments of $X$ looked backwards from $k$ and those looked forwards from $k$ ``observe the same property'' up to $k$ and $n-k$ respectively. This is the content of Theorem~\ref{thm:discrete} of Subsection~\ref{subsection:the-characterization}. Once this has been achieved, it is then not difficult to put in evidence many examples of symmetric splitting times for $X$, see Example~\ref{example:basic-other-families}. With the benefit of hindsight they will appear to the reader as nearly obvious. 

Actually, we shall cast our results somewhat more generally. Indeed, it seems that the whole matter is most conveniently rendered and manipulated in terms of a certain discrete  one-dimensional time-symmetric ``noise'' (in the sense of Tsirelson \cite{picard2004lectures,tsirelson-nonclassical}) that is canonically attached to the random walk $X$, and it is only this abstract noise structure, formally specified in Subsection~\ref{subsection:setting}, that is relevant to the problem at hand. 

In continuous time our ambitions shall be much more reserved and we shall content ourselves with formulating, in Section~\ref{section:bm}, the problem for linear Brownian motion, which appears to be particulary interesting since the discrete results do not appear to offer any insight into providing examples of times other than the maxima and minima that would split it symmetrically. We shall also make some comments as to why proving the converse, that there are none, might equally be quite subtle (if true).

Dropping the symmetry requirement and asking only for independence we arrive at the notion of what we may call simply a ``splitting time''. These are plentiful even for a 1d Brownian motion $B$, one family of examples being the times of the minima of $B$ to which is added a deterministic drift, some others follow from the findings of  \cite[esp. Theorem~5.7]{vidmar-warren}. More broadly, random times at which a PSII (resp. Markov process) splits into an independent past and future (resp. conditionally on the present) have received  a considerable amount of interest, see e.g. \cite[Section~III.49]{rogers-williams} \cite {splitting-non-standard,greenwood-shaked,KENNEDY,independence-vidmar,perman} and the references therein (observe that in this context $\mathsf{g}$/$\mathsf{e}$ can be thought of as just the first arrival time of an independent Bernoulli/homogeneous Poisson process). Notably, \cite{greenwood-shaked,KENNEDY}  investigate so-called dual  stopping times: a kind of self-duality corresponds then to a subclass of symmetric splitting families that we call  honest and explore in Subsection~\ref{honest}. See also \cite{kallenberg-splitting} for the setting of regenerative sets. 

It is the  requirement of equality of distribution, over and above independence, which gives symmetric splitting times, considered herein, added appeal.

\subsection{General notation} We write $\PP[f]$ for the expectation of a measurable numerical $f$ under a probability $\PP$. For a $\sigma$-algebra $\GG$, $\GG_+$ (resp. $\GG_b$) is the family of all  $\GG$-measurable $[0,\infty]$-valued (resp. real bounded) maps. $2^A$ denotes the power set of $A$, as usual. Given  a sequence $\theta\in \mathbb{R}^\mathbb{Z}$ we interpret $\sum_{k=1}^0\theta(k):=0$ and $\sum_{k=1}^{n}\theta(k):=-\sum_{k=n+1}^0\theta(k)$  for $n\in -\mathbb{N}$, which ensures that $\sum_{k=1}^n\theta(k)-\sum_{k=1}^m\theta(k)=\sum_{k=m+1}^n\theta(k)$ for all $m< n$ from $\ZZ$, while $\sum_{k=1}^{-n}\theta(k)=-\sum_{k=1}^n\theta(-k+1)$ for all $n\in \ZZ$. In the context of random times we adhere to the usual convention $\inf\emptyset:=\infty$.

\section{Splitting symmetrically a  discrete noise}\label{discrete}

\subsection{Setting}\label{subsection:setting}
As mentioned, we shall formulate everything in the milieu of the discrete notion of a one-dimensional noise, \`a la Tsirelson, equipped with a suitable symmetry of time-reversal. For immediate motivation of this abstract structure the reader should consult Example~\ref{example:basic} just below.

On a probability space $(\Omega,\GG,\PP)$ let then there be given a family  $\FF=(\FF_{m,n})_{(m,n)\in \mathbb{Z}^2,m\leq n}$ of $\PP$-complete (i.e., each containing $\PP^{-1}(\{0,1\})$) sub-$\sigma$-fields of $\GG$ and  a $\ZZ$-indexed  group $\Delta=(\Delta_h)_{h\in \mathbb{Z}}$ of  transformations  on $\Omega$, each measurable-$\GG/\GG$, preserving the measure $\PP$, and satisfying between them the following properties:
\begin{enumerate}
\item\label{properties:i} $\FF_{-\infty,\infty}:=\lor_{n\in \mathbb{N}_0}\FF_{-n,n}=\GG$ (which may, ceteris paribus, always be ensured by restriction of $\PP$);
 \item $\Delta_h^{-1}\FF_{m,n}=\FF_{m+h,n+h}$ for all integer $m\leq n$ and  $h$  (homogeneity); 
 \item $\FF_{l,m}$ and $\FF_{m,n}$ are independent and their join is $\FF_{l,n}$, this for all integer $l\leq m\leq n$ (factorization).
\suspend{enumerate} 
 We would say that $\FF$ constitutes a one-dimensional noise (in discrete time) under $\PP$, $\Delta$ being its time-shifts.  For $n\in \ZZ$ write $\FF_{n,\infty}:=\lor_{m\in \ZZ,m\geq n}\FF_{n,m}$ and $\FF_{-\infty,n}:=\lor_{m\in \ZZ,m\leq n}\FF_{m,n}$; also  
  $\FF_n:=\FF_{0,n}$ for  $n\in \mathbb{N}_0\cup \{\infty\}$. Of course, $\FF_{0}=\PP^{-1}(\{0,1\})$, the $\PP$-trivial $\sigma$-field.
 
Let there be given also an involution $\rr$ on $\Omega$ (so, $\rr\circ \rr=\mathrm{id}_\Omega$), measurable-$\GG$/$\GG$, preserving the measure $\PP$ and such that 
\resume{enumerate}
\item $\rr\circ \Delta_h=\Delta_{-h}\circ \rr$ for all integer $h$ (intertwining relation);
\item\label{properties:v} $\rr^{-1}\FF_{m,n}=\FF_{-n,-m}$ for all integer  $m \leq n$ (time-reflection property).
\end{enumerate}
We  summarize this by saying that the noise $(\PP;\FF,\Delta)$ is symmetric under the time-reversal $\rr$. Notice that as a consequence of the intertwining relation and the group character of $\Delta$, like $\rr$, so too for each $m\in \mathbb{Z}$, the ``reflection about $m$'', namely the measure-preserving map $\rr\circ \Delta_m$, is  involutive.

The most significant instantiation of the above  structure to keep in mind is that of 
\begin{example}\label{example:basic}
Let $X=(X_n)_{n\in \mathbb{Z}}$ be the canonical process on $\Omega':=\{\omega\in \mathbb{R}^\mathbb{Z}:\omega(0)=0\}$, endowed with the trace $\GG'$ of the tensor $\sigma$-field $\BB_\mathbb{R}^{\otimes \mathbb{Z}}$, and let $\PP'$ be a probability on $(\Omega',\GG')$ making $X$ into a  two-sided symmetric random walk issuing from $0$. For $m\leq n$ from $\ZZ$ let $\FF_{m,n}'$ be the $\sigma$-field generated by the increments of $X$ on the interval $[m,n]$ and by the trivial events ${\PP'}^{-1}(\{0,1\})$, for $h\in \mathbb{Z}$ let $\Delta_h':=(\Omega\ni \omega\mapsto \omega(\cdot+h)-\omega(h))$  be the L\'evy shift by $h$, finally set $\rr':=(\Omega\ni \omega\mapsto (\mathbb{Z}\ni n\mapsto \omega(-n)))$ for time-reversal. Then $(\PP';\FF',\Delta')$ is a noise symmetric under $\rr'$.
\end{example}
It is worthwhile to reinterpret Example~\ref{example:basic}  in terms of the increments of the random walk, rather than its paths. It distills to 

\begin{example}\label{ex.increments}
Let $(E,\EE,\nu)$ be a probability space, $\xi=(\xi_n)_{n\in \mathbb{Z}}$ the canonical process on $\Omega':=E^\ZZ$ endowed with the tensor $\sigma$-field $\GG':=\EE^{\otimes \ZZ}$ and the product probability $\PP':=\nu^{\times \mathbb{Z}}$. Thus $\xi$ is nothing  but a two-sided sequence of i.i.d. random elements having the law $\nu$ under $\PP'$. For $m\leq n$ from $\mathbb{Z}$ let $\FF_{m,n}'$ be the $\PP'$-complete $\sigma$-field generated by $\xi_{m+1},\ldots,\xi_n$ and set $\Delta_h':=(\Omega\ni \theta\mapsto \theta(\cdot+h))$ for $h\in \ZZ$.  Finally let $\iota$ be an involution on $E$, measurable-$\EE$/$\EE$, preserving the measure $\nu$ and put $\rr':=(\Omega\ni \theta\mapsto (k\ni \mathbb{Z}\mapsto \iota(\theta(-k+1))))$. Then $(\PP';\FF',\Delta')$ is a noise symmetric under $\rr'$. (Example~\ref{example:basic} is recovered by taking for $\nu$ the law of $X_1$ therein and  $\iota=-\mathrm{id}_{\mathbb{R}}$, up to the mapping $(\mathbb{R}^\mathbb{Z}\ni \theta\mapsto (\ZZ\ni n\mapsto \sum_{k=1}^n\theta(k)))$ that ``preserves everything in sight''.)
\end{example}

\begin{remark}\label{rmk:level-of-generality}
If $\PP$ is standard, then we can form the quotient $(E,\EE,\nu):=(\Omega,\FF,\PP)/_{\FF_1}$. The associated quotient map $[\cdot]:\Omega\to E$ sends the $\FF_1$/$\FF_1$-measurable map $\rr\circ \Delta_1$ to the involution $\iota$ of $E$ that preserves the measure $\nu$: $\iota([\omega])=[(\rr\circ \Delta_1)(\omega)]$ for $\PP$-a.e. $\omega$.  Since $\GG$ is the independent join of $\lor_{n\in \ZZ}\FF_{n,n+1}$ we get the natural mod-$0$ isomorphism $\psi:=(\Omega\ni \omega\mapsto (\ZZ\ni k\mapsto [\Delta_{k-1}(\omega)]))$ between $\PP$ and $\PP':=\nu^{\times \mathbb{Z}}$, which carries $\FF$, $\Delta$ and $\rr$ respectively onto the $\FF'$, $\Delta'$ and $\rr'$ of Example~\ref{ex.increments} associated to this $\nu$ and  $\iota$ (this is what we meant also there under ``preservation of everything in sight''). 
\end{remark}
We  further subject to our analysis a sequence $\tau=(\tau_n)_{n\in \mathbb{N}_0}$ of random times, $\tau_n$ taking values in $[0,n]\cap \mathbb{Z}$ and being $\FF_{n}$-measurable for all $n\in \mathbb{N}_0$. Notice that $\tau_0=0$ identically. 

Lastly, for each fixed $p\in (0,1)$, under an enlargement of $\PP$ --- that, by a standard abuse of notation, we shall not make explicit in the notation --- we grant ourselves access  to an independent geometric random time $\gp$ with  values in $\mathbb{N}_0$, success parameter $p$. To avoid any ambiguity, the geometric law designation in the preceding stipulates that $$\PP(\gp=k)=p(1-p)^k,\quad k\in \mathbb{N}_0,$$ and we  write $\gp\sim \geom_{\mathbb{N}_0}(p)$ for short.
\subsection{The characterization}\label{subsection:the-characterization}
 
 We are ready to formally introduce the fundamental concept that we wish to concern ourselves with. 
\begin{definition}\label{def-}
We say that $\tau$ is a symmetric splitting family for $(\PP;\FF,\Delta,\rr)$ if, for all $p\in (0,1)$, relative to the $\sigma$-field $\FF_\infty\otimes 2^{\mathbb{N}_0}$, the pair $(\rr\circ \Delta_{\tau_\gp},\tau_\gp)$  is independent of and has the same distribution as the pair $(\Delta_{\tau_\gp},\gp-\tau_\gp)$.
\end{definition}
Let us stress/reiterate that in Definition~\ref{def-} the two pairs under inspection are viewed as random elements in $(\Omega\times (0,\infty),\FF_\infty\otimes 2^{\mathbb{N}_0})$. Since this property of symmetric splitting is left invariant by mod-$0$ isomorphisms ``preserving everything in sight'' Remark~\ref{rmk:level-of-generality} entails that actually  Example~\ref{ex.increments} is  quite general in view of our mandate.

Now, if $\tau$ is a symmetric splitting family, then, for each $p\in (0,1)$,
 the independence and equality in distribution of $\tau_\gp$ and $\gp-\tau_\gp$  ensure that, for all $v\in (0,1]$, $$\PP[v^{\tau_\gp}]^2=\PP[v^{\tau_\gp}]\PP[ v^{\gp-\tau_\gp}]=\PP[v^{\tau_\gp} v^{\gp-\tau_\gp}]=\PP[v^\gp],$$ which is uniquely determined (a universal constant depending only on $p\in (0,1)$ and $v\in (0,1]$). So, the distribution of $\tau_\gp$ is uniquely determined (depends only on $p\in (0,1)$). In turn, for each $n\in \mathbb{N}_0$, the distribution of $\tau_n$ is uniquely determined (a universal constant). It is nothing but the distribution of the time of the minimum on $[0,n]$ of any symmetric random walk whose times of minima are a.s. unique; in particular, $\PP(\tau_n=k)>0$ for all $k\in[0,n]\cap \ZZ$.  Indeed:

 \begin{example}\label{example:minima}
In the context of Example~\ref{example:basic}, assuming the increments of $X$ have diffuse law, we may set, for each $n\in \mathbb{N}_0$, $\tau^0_n$ equal to the time of the minimum of $X$ on $[0,n]$ (when unique, and, say, for definiteness, $\tau^0_n:=0$ on the exceptional negligible set on which it fails). Then $\tau^0=(\tau^0_n)_{n\in \mathbb{N}_0}$ is a symmetric splitting family for $X$ (that is to say, for the symmetric noise attached to it). 
\end{example}
Notice that in the preceding $\tau_n^0$ is a.s. got by selecting that unique time point on $[0,n]$ out of which the increments of $X$ looked  backwards and forwards observe the same property (to wit, stay  above zero) up to times $k$ and $n-k$ respectively. This may be taken as representing the motivation and intuition for the finding of

\begin{theorem}\label{thm:discrete}
The following two statements are equivalent.
\begin{enumerate}[(a)]
 \item\label{thm:discrete:a} The family $\tau$ is symmetric splitting for $(\PP;\FF,\Delta,\rr)$.
 \item\label{thm:discrete:b} For $\{m,n\}\subset \mathbb{N}_0$, $\PP$-a.s., 
 \begin{equation}\label{main}
 \{\tau_{m+n}=m\}= (\rr\circ \Delta_m)^{-1}(\{\tau_m=0\})\cap \Delta_m^{-1}(\{\tau_{n}=0\}).
 \end{equation}
\end{enumerate}
In particular, if $\tau$ is symmetric splitting for $(\PP;\FF,\Delta,\rr)$, then automatically 
\begin{equation}\label{symmetric-split-eq}
\tau_n+\tau_n( \rr\circ \Delta_n)=n\text{ a.s.-$\PP$ for all $n\in \mathbb{N}_0$.}
\end{equation}
\end{theorem}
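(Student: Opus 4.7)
The plan is to read both directions via the joint generating function
\[
\Theta(f, g, \alpha, \beta) := \PP\bigl[f(\rr \circ \Delta_{\tau_\gp})\, g(\Delta_{\tau_\gp})\, \alpha^{\tau_\gp}\, \beta^{\gp - \tau_\gp}\bigr] = \sum_{m,n \geq 0} p(1-p)^{m+n} \alpha^m \beta^n\, \PP\bigl[f(\rr\circ\Delta_m)\,g(\Delta_m);\,\tau_{m+n}=m\bigr],
\]
where $f, g$ range over bounded $\FF_\infty$-measurable functions and $\alpha, \beta \in (0, 1]$; test objects of this form are determining on $(\Omega\times\mathbb{N}_0, \FF_\infty\otimes 2^{\mathbb{N}_0})$.

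For (b) $\Rightarrow$ (a), I substitute \eqref{main} to split $\{\tau_{m+n}=m\}$ into an event measurable relative to $\FF_{0,m}$ (the pullback by $\rr\circ\Delta_m$, which lands there via $\rr^{-1}\FF_{0,m} = \FF_{-m,0}$ and $\Delta_m^{-1}\FF_{-m,0} = \FF_{0,m}$) and one measurable relative to $\FF_{m,m+n}$ (the pullback by $\Delta_m$). Factorization together with the measure-preservation of $\rr\circ\Delta_m, \Delta_m$ then collapses the sum to
\[
\Theta(f, g, \alpha, \beta) = p\,U(f,\alpha)\,U(g,\beta), \qquad U(h, \gamma) := \sum_{k\geq 0} \bigl(\gamma(1-p)\bigr)^k \PP[h;\tau_k = 0].
\]
This expression is symmetric under the swap $(f,\alpha)\leftrightarrow(g,\beta)$, so the two marginals of the pair agree in law; and the normalization $\Theta(1,1,1,1) = 1$ forces $p\,U(1,1)^2 = 1$, whence $\Theta$ equals the product of its own marginals, which is the required independence.

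For (a) $\Rightarrow$ (b), I invert the computation. Independence and equidistribution from Definition~\ref{def-} give $\Theta(f,g,\alpha,\beta) = \Phi(f,\alpha)\Phi(g,\beta)$ for the common marginal $\Phi$. Taking $f=\mathbf{1}_A, g=\mathbf{1}_B$ and comparing $\alpha^m\beta^n$-coefficients produces the $q$-identity $(1-q)G^A_m(q)H^B_n(q) = \eta^{A,B}_{m,n}$ (here $q := 1-p$, $\eta^{A,B}_{m,n}:=\PP[\tau_{m+n}=m,(\rr\circ\Delta_m)^{-1}(A),\Delta_m^{-1}(B)]$, and $G^A_m, H^B_n$ the generating functions $\sum_l q^l\mu^A_{m,l}$, $\sum_k q^k\nu^B_{k,n}$ of $\mu^A_{m,l} := \PP[\tau_{m+l}=m,(\rr\circ\Delta_m)^{-1}(A)]$ and $\nu^B_{k,n} := \PP[\tau_{k+n}=k,\Delta_k^{-1}(B)]$). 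Extracting the $q^0$-coefficient gives $\mu^A_{m,0}\,\nu^B_{0,n} = \eta^{A,B}_{m,n}$; meanwhile equidistribution taken alone (i.e.\ $\Phi=\Psi$ with $\Psi$ the marginal of the second pair) identifies $\mu^A_{m,0} = \nu^A_{0,m} = \PP[A,\tau_m=0]$. Re-expressing the resulting scalar identity via measure-preservation and the independence of $\FF_{-\infty,m}$ and $\FF_{m,\infty}$ yields
\[
\PP\bigl[\{\tau_{m+n}=m\}\cap D\bigr] = \PP\bigl[(\rr\circ\Delta_m)^{-1}\{\tau_m=0\}\cap\Delta_m^{-1}\{\tau_n=0\}\cap D\bigr]
\]
for every $D$ in the $\pi$-system $\{(\rr\circ\Delta_m)^{-1}(A)\cap\Delta_m^{-1}(B): A\in\FF_{0,m},\, B\in\FF_{0,n}\}$. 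That $\pi$-system generates $\FF_{0,m+n}$, and both events in question are already $\FF_{0,m+n}$-measurable, so a monotone-class / $\pi$-$\lambda$ argument upgrades the numerical identity to the a.s.\ set equality \eqref{main}.

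Finally, \eqref{symmetric-split-eq} falls out of \eqref{main} by applying $(\rr\circ\Delta_{m+n})^{-1}$ to both sides and using the intertwining $\rr\circ\Delta_h = \Delta_{-h}\circ\rr$: one computes $(\rr\circ\Delta_m)\circ(\rr\circ\Delta_{m+n}) = \Delta_n$ and $\Delta_m\circ(\rr\circ\Delta_{m+n}) = \rr\circ\Delta_n$, whence the right-hand side of \eqref{main} transforms into $\Delta_n^{-1}\{\tau_m=0\}\cap(\rr\circ\Delta_n)^{-1}\{\tau_n=0\}$, which by \eqref{main} with $(m,n)$ swapped equals $\{\tau_{m+n}=n\}$. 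Hence $(\rr\circ\Delta_{m+n})^{-1}\{\tau_{m+n}=m\} = \{\tau_{m+n}=n\}$ a.s., i.e.\ $\tau_{m+n}\circ\rr\circ\Delta_{m+n} = (m+n) - \tau_{m+n}$ a.s., which is \eqref{symmetric-split-eq}. The main technical subtlety is the coefficient-extraction step in (a) $\Rightarrow$ (b): one must isolate the per-$(m,n)$ set-theoretic content from the joint generating-function identity, and then choose a $\pi$-system large enough to generate $\FF_{0,m+n}$ yet amenable to direct probability computation via the independent pieces.
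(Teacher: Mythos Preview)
Your argument is correct. The direction \ref{thm:discrete:b} $\Rightarrow$ \ref{thm:discrete:a} is the paper's computation \eqref{eq:ancillary} repackaged in generating-function language, and the derivation of \eqref{symmetric-split-eq} from \eqref{main} is identical.

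For \ref{thm:discrete:a} $\Rightarrow$ \ref{thm:discrete:b} you share the paper's key inputs --- the identity obtained by sending $p\uparrow 1$ (your ``$q^0$-coefficient''), together with equidistribution to match $\mu^A_{m,0}=\nu^A_{0,m}$ --- but you organize the conclusion differently. The paper first argues structurally: from
\[
\PP(\tau_{m+n}=m)\,\mathbbm{1}_{\{\tau_{m+n}=m\}}=\PP(\tau_{m+n}=m\mid\FF_m)\,\PP(\tau_{m+n}=m\mid\FF_{m,m+n})\quad\text{a.s.}
\]
it deduces that $\{\tau_{m+n}=m\}$ is an intersection of an $\FF_m$-event and an $\FF_{m,m+n}$-event, and then identifies these pieces in several steps (symmetry gives $B_{n,m}=A_{m,n}$, back-substitution gives $A_{m+n,m}=A_{m,m}$, and $m=0$ pins down $A_n=\{\tau_n=0\}$). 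You bypass this by checking directly that the two candidate events have equal $\PP$-mass on every set of the $\pi$-system $\{(\rr\circ\Delta_m)^{-1}(A)\cap\Delta_m^{-1}(B):A\in\FF_m,\,B\in\FF_n\}$ generating $\FF_{m+n}$, and invoking $\pi$--$\lambda$. This is a genuine streamlining: it avoids the intermediate conditional-expectation step and the piecewise identification, at the cost of having to recognize in advance what the target event should be (which the paper's route discovers rather than assumes).
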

\begin{proof}
\ref{thm:discrete:b} $ \Rightarrow$ \ref{thm:discrete:a}. Let $p\in (0,1)$,  $\{f,g\}\subset (\FF_\infty)_+$ and $\{m,n\}\subset \mathbb{N}_0$. We compute 
\begin{align}\nonumber
& \PP [f(\rr\circ \Delta_{\tau_\gp})g(\Delta_{\tau_\gp});\tau_\gp=m,\gp-\tau_\gp=n]\\\nonumber
&= \PP [f(\rr\circ \Delta_m)g(\Delta_m);\tau_{m+n}=m,\gp=m+n]\\\nonumber
 &=p(1-p)^{m+n} \PP [f(\rr\circ \Delta_m)g(\Delta_m);\tau_{m+n}=m]\quad (\because \text{ $\gp\sim \geom_{\mathbb{N}_0}(p)$ is independent  of $\GG$})\\\nonumber
  &=p(1-p)^{m+n} \PP [f(\rr\circ \Delta_m)g(\Delta_m); (\rr\circ \Delta_m)^{-1}(\{\tau_m=0\})\cap \Delta_m^{-1}(\{\tau_n=0\})]\quad (\because\, \eqref{main})\\\nonumber
 &=p(1-p)^{m+n} \PP [f(\rr\circ \Delta_m);(\rr\circ \Delta_m)^{-1}(\{\tau_m=0\})]\PP[g(\Delta_m);\Delta_m^{-1}(\{\tau_n=0\})]\\
&=p(1-p)^{m+n} \PP [f;\tau_m=0]\PP[g;\tau_n=0]\quad (\because \text{$\rr$ and $ \Delta_m$ are measure-preserving}),\label{eq:ancillary}
\end{align}
where in the penultimate equality we take into account that by homogeneity and the time-reflection property  $f(\rr\circ \Delta_m) \in (\FF_{-\infty,m})_+$, $(\rr\circ \Delta_m)^{-1}(\{\tau_m=0\})\in  \FF_{m}\subset \FF_{-\infty,m}$, while $g(\Delta_m)\in \FF_{m,\infty}$, $\Delta_m^{-1}(\{\tau_n=0\})\in \FF_{m,m+n}\subset \FF_{m,\infty}$, together with the factorization property of $\FF$ which ensures the independence of $\FF_{-\infty,m}$ and $\FF_{m,\infty}$.

Taking in \eqref{eq:ancillary}, consecutively: $f\equiv 1$ and $\sum_{m\in \mathbb{N}_0}$;  $g\equiv 1$ and $\sum_{n\in \mathbb{N}_0}$; both of the preceding --- then recombining, we deduce that
$$\PP [f(\rr\circ \Delta_{\tau_\gp});\tau_\gp=m]=\PP [f(\Delta_{\tau_\gp});\gp-\tau_\gp=m]$$ and
$$\PP [f(\rr\circ \Delta_{\tau_\gp})g(\Delta_{\tau_\gp});\tau_\gp=m,\gp-\tau_\gp=n]=\PP [f(\rr\circ \Delta_{\tau_\gp});\tau_\gp=m]\PP [g(\Delta_{\tau_\gp});\gp-\tau_\gp=n].$$
By a monotone class argument this is enough to establish that, relative to the $\sigma$-field $\FF_\infty\otimes 2^{\mathbb{N}_0}$, $(\rr\circ \Delta_{\tau_\gp},\tau_\gp)$  is independent of and has the same distribution as $(\Delta_{\tau_\gp},\gp-\tau_\gp)$, which is \ref{thm:discrete:a}.

\ref{thm:discrete:a} $\Rightarrow$ \ref{thm:discrete:b}. We reverse course.

On the one hand, for all $p\in (0,1)$,  $\{m,n\}\subset \mathbb{N}_0$ and $\{g,f\}\subset (\FF_\infty)_b$ we have, from the independence of $(\rr\circ \Delta_{\tau_\gp},\tau_\gp)$ and $(\Delta_{\tau_\gp},\gp-\tau_\gp)$ relative to  $\FF_\infty\otimes 2^{\mathbb{N}_0}$, that
\begin{equation*}
\PP [f(\rr\circ \Delta_{\tau_\gp})g(\Delta_{\tau_\gp});\tau_\gp=m,\gp-\tau_\gp=n]=\PP [f(\rr\circ \Delta_{\tau_\gp});\tau_\gp=m]\PP [g(\Delta_{\tau_\gp});\gp-\tau_\gp=n],
 \end{equation*}
 which is to say
\begin{align*}
&p(1-p)^{m+n}\PP [f(\rr\circ \Delta_m)g(\Delta_{m});\tau_{m+n}=m]\\
&=\left(\sum_{k=m}^\infty p(1-p)^{k}  \PP[f(\rr\circ \Delta_m);\tau_k=m]\right)\left(\sum_{k=n}^\infty p(1-p)^{k}  \PP [g(\Delta_{k-n});\tau_{k}=k-n]\right).
\end{align*} 
Dividing both sides by $(1-p)^{m+n}$ and letting $p\uparrow 1$ we infer 
\begin{equation}\label{fundamental:converse}
\PP [f(\rr\circ \Delta_m)g(\Delta_{m});\tau_{m+n}=m]= \PP[f(\rr\circ \Delta_m);\tau_m=m] \PP [g;\tau_n=0].
\end{equation}
  Taking in \eqref{fundamental:converse} in turn $f\equiv 1$, then $g\equiv 1$, then  both of these; and recombining, we see that 
$$\PP(\tau_{m+n}=m)\PP [f(\rr\circ \Delta_m)g(\Delta_{m});\tau_{m+n}=m]=\PP [f(\rr\circ \Delta_m);\tau_{m+n}=m]\PP [g(\Delta_{m});\tau_{m+n}=m].$$
Using $\{m+n\}\in \FF_{m+n}$ (recall $\tau$ is adapted) and the fact that by the factorization property of $\FF$, $\FF_{m+n}$ is the independent join of $\FF_{m}$ and $\FF_{m,m+n}$ the latter in turn renders (via a monotone class argument) 
  \begin{equation*}
\PP(\tau_{m+n}=m)  \mathbbm{1}_{\{\tau_{m+n}=m\}}=\PP(\tau_{m+n}=m\vert \FF_{m})\PP(\tau_{m+n}=m\vert \FF_{m,m+n})\text{ a.s.-$\PP$},
  \end{equation*}
  which can only be if there are strictly positive real numbers $a$ and $b$ and events $A\in \FF_m$ and $B\in \FF_{m,m+n}$,  such that  $\PP(\tau_{m+n}=m\vert \FF_{m})=a\mathbbm{1}_{A}$  and 
  $\PP(\tau_{m+n}=m\vert\FF_{m,m+n})=b\mathbbm{1}_B$ a.s.-$\PP$. We deduce that 
  \begin{equation}\label{eq:identify-independence}
\{\tau_{m+n}=m\}=  (\rr \circ \Delta_m)^{-1}(A_{m,n})\cap \Delta_m^{-1}(B_{m,n})\text{ a.s.-$\PP$}
  \end{equation}
  for some a.s.-$\PP$ unique $A_{m,n}\in \FF_m$, $B_{m,n}\in \FF_n$.  
  
  On the other hand, for again $p\in (0,1)$,  $m\in \mathbb{N}_0$ and $f\in (\FF_\infty)_b$,  from the equality in distribution of $(\rr\circ \Delta_{\tau_\gp},\tau_\gp)$ and $(\Delta_{\tau_\gp},\gp-\tau_\gp)$ relative to  $\FF_\infty\otimes 2^{\mathbb{N}_0}$, 
  $$\PP [f(\rr\circ \Delta_{\tau_\gp});\tau_\gp=m]=\PP [f(\Delta_{\tau_\gp});\gp-\tau_\gp=m],$$
  i.e. $$\sum_{k=m}^\infty p(1-p)^{k}  \PP[f(\rr\circ \Delta_m);\tau_k=m]=\sum_{k=m}^\infty p(1-p)^{k}  \PP [f(\Delta_{k-m});\tau_{k}=k-m].$$ This being  so for all $p\in (0,1)$, comparing the coefficients in these two $(1-p)$-power series, we infer that, for all $n\in \mathbb{N}_0$,
  $$ \PP[f(\rr\circ \Delta_m);\tau_{m+n}=m]= \PP [f(\Delta_{n});\tau_{m+n}=n],$$
  viz., on inserting \eqref{eq:identify-independence}, taking into account the factorization property of $\FF$ and since the time-shifts and the reflection of time are measure-preserving, 
 $$ \PP[f;A_{m,n}]\PP(B_{m,n})= \PP [f;B_{n,m}]\PP(A_{n,m}).$$  
Because  $f\in( \FF_\infty)_b$ was arbitrary, we are forced to conclude that $$B_{n,m}=A_{m,n}\text{ a.s.-$\PP$}.$$ Relabeling the $A$ sets we conclude that 
  \begin{equation}\label{eq:identify-independence+sym}
\{\tau_{m+n}=m\}=  (\rr \circ \Delta_m)^{-1}(A_{m+n,m})\cap \Delta_m^{-1}(A_{m+n,n})\text{ a.s.-$\PP$}
  \end{equation}
    for some $A_{k,l}\in \FF_l$ as $k\geq l$ run over $\mathbb{N}_0$.  Here we may and do insist that $A_{k,0}=\Omega$ for all $k\in \mathbb{N}_0$. 
  
Return once again to \eqref{fundamental:converse}, which in conjunction with \eqref{eq:identify-independence+sym} tells us that 
 \begin{equation*}
 \PP [f;A_{m+n,m}]\PP[g;A_{m+n,n}]= \PP[f;A_{m,m}] \PP [g;A_{n,n}].
  \end{equation*}
  But this  can only be if $A_{m+n,m}=A_{m,m}=:A_m$ a.s.-$\PP$. In other words, we can recast \eqref{eq:identify-independence+sym} as   \begin{equation*}
\{\tau_{m+n}=m\}=  (\rr \circ \Delta_m)^{-1}(A_m)\cap \Delta_m^{-1}(A_n)\text{ a.s.-$\PP$.}
  \end{equation*}
 Plugging in $m=0$ we identify $A_n=\{\tau_n=0\}$ a.s.-$\PP$ and the proof of the necessity of \ref{thm:discrete:b}  for \ref{thm:discrete:a} is complete.
 
 The final assertion of the theorem follows at once from the characterization \ref{thm:discrete:b}  coupled with the intertwining relation and group properties of $\rr$ and $\Delta$. Explicitly, we check that $$(\rr\circ \Delta_{m+n})^{-1}(\{\tau_{m+n}=n\})=\{\tau_{m+n}=m\} \text{ a.s.-$\PP$ for } \{m,n\}\subset \mathbb{N}_0,$$ which entails \eqref{symmetric-split-eq}.
\end{proof}

\begin{corollary}\label{corollary:preclude}
The existence of a symmetric splitting family $\tau$ for $( \PP;\FF,\Delta,\rr)$ precludes the possibility of $\PP\vert_{\FF_1}$ having atoms. 
\end{corollary}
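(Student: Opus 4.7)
The plan is to argue by contradiction. Suppose $A\in \FF_1$ is an atom of $\PP\vert_{\FF_1}$ with $a:=\PP(A)>0$. Since $\tau_1$ is $\FF_1$-measurable and $\{0,1\}$-valued, $A$ sits modulo null inside one of $\{\tau_1=0\}$ or $\{\tau_1=1\}$. Replacing $A$ with $(\rr\circ \Delta_1)^{-1}(A)$ if needed---which remains an $\FF_1$-atom of probability $a$, because $\rr\circ \Delta_1$ is a measure-preserving involution for which the time-reflection property and homogeneity give $(\rr\circ \Delta_1)^{-1}\FF_{0,1}=\FF_{0,1}$---I may assume $A\subseteq \{\tau_1=0\}$ a.s. Put then $A':=(\rr\circ \Delta_1)^{-1}(A)$, a second $\FF_1$-atom of size $a$, which by \eqref{symmetric-split-eq} at $n=1$ lies in $\{\tau_1=1\}$ a.s.

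The heart of the proof is to examine the event $E:=A\cap \Delta_1^{-1}(A')$. By homogeneity $\Delta_1^{-1}(A')\in \FF_{1,2}$ is an atom of $\FF_{1,2}$ of size $a$; and by the factorization property $\FF_{0,1}$ and $\FF_{1,2}$ are independent with join $\FF_2$, so $E$ is itself an atom of $\FF_2$, of probability $a^2>0$. On the one hand, Theorem~\ref{thm:discrete}\ref{thm:discrete:b} at $m=n=1$ gives $\{\tau_2=1\}=\{\tau_1=1\}\cap \Delta_1^{-1}\{\tau_1=0\}$, which together with $E\subseteq \{\tau_1=0\}$ forces $E\cap \{\tau_2=1\}=\emptyset$ a.s.; being an atom of $\FF_2$, $E$ thus lies wholly in $\{\tau_2=0\}$ or wholly in $\{\tau_2=2\}$, modulo null. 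On the other hand, \eqref{symmetric-split-eq} at $n=2$ asserts that the measure-preserving involution $(\rr\circ \Delta_2)^{-1}$ swaps these two events.

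To conclude, it will suffice to verify that $E$ itself is a.s. invariant under $(\rr\circ \Delta_2)^{-1}$: for then applying the involution to the a.s.-inclusion $E\subseteq \{\tau_2=0\}$ (say) would yield $E\subseteq \{\tau_2=2\}$ as well, whence $E\subseteq \{\tau_2=0\}\cap \{\tau_2=2\}=\emptyset$ a.s., contradicting $\PP(E)=a^2>0$. The invariance reduces, via $\Delta_2^{-1}=\Delta_1^{-1}\circ \Delta_1^{-1}$ and the intertwining relation $\rr^{-1}\circ \Delta_h^{-1}=\Delta_{-h}^{-1}\circ \rr^{-1}$, to checking the two identities $(\rr\circ \Delta_2)^{-1}(A)=\Delta_1^{-1}(A')$ and $(\rr\circ \Delta_2)^{-1}(\Delta_1^{-1}(A'))=A$; that is, the involution exchanges the two factors defining $E$. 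I expect this brief computation to be the only mildly delicate step---but one essentially of bookkeeping, morally reflecting that $\rr\circ \Delta_2$ is ``reflection of time about the midpoint~$1$''.
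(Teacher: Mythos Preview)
Your argument is correct and follows essentially the same approach as the paper: both construct the $(\rr\circ\Delta_2)$-invariant $\FF_2$-atom $A\cap(\rr\circ\Delta_2)^{-1}(A)$ (your $E$ is exactly this set once one unwinds $\Delta_1^{-1}(A')$) and combine \eqref{main} with \eqref{symmetric-split-eq} to reach a contradiction. The only cosmetic difference is that you first normalise to $A\subset\{\tau_1=0\}$ and obtain the contradiction directly at $n=2$, whereas the paper forgoes this reduction, infers $\tau_2=1$ on the atom from invariance, and then pushes the contradiction back to $n=1$ via a second auxiliary atom.
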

As a trivial case we get that the simple symmetric random walk on the integers admits no symmetric splitting.
\begin{proof}
For suppose, per absurdum, $B\in \FF_1$ is an atom of $\PP\vert_{\FF_1}$ and $\tau$ is a symmetric splitting family. Then $A:=B\cap (\rr\circ \Delta_2)^{-1}(B)$ is an atom of $\PP\vert_{\FF_2}$. Hence, measurability of $\tau_2$ for $\FF_2$ entails that $\tau_2$ is a.s.-$\PP$ constant on the event $A$ that satisfies $(\rr\circ \Delta_2)^{-1}(A)=A$.  \eqref{symmetric-split-eq} then requires the value of this constant to be  $1$. In turn, by \eqref{main}, we are forced to conclude that $\tau_1=0$ a.s.-$\PP$ on $ (\rr\circ \Delta_1)^{-1}(B)$. By the same token as for $A$, a.s.-$\PP$ on the atom $A':=(\rr\circ \Delta_1)^{-1}(B)\cap \Delta_1^{-1}(B)$ of $\PP\vert_{\FF_2}$ that also satisfies $(\rr\circ \Delta_2)^{-1}(A')=A'$ we have $\tau_2=1$, which in turn requires $\tau_1=0$ a.s.-$\PP$ on $B$. Altogether we must have $\tau_1+\tau_1(\rr\circ \Delta_1)=0+0=0$ a.s.-$\PP$ on  $B$ in contradiction with \eqref{symmetric-split-eq}. 
\end{proof}

Example~\ref{example:minima} and Theorem~\ref{thm:discrete} lead to a family of examples of symmetric splitting times got from random walks that are adapted, in a sense to be made precise at once, to $(\PP;\FF,\Delta,\rr)$.
\begin{definition}
A two-sided random walk $X=(X_n)_{n\in \mathbb{Z}}$ with values  in the real line is said to be adapted to $(\PP;\FF,\Delta,\rr)$ if $X\circ \Delta_h=X_{h+\cdot}-X_h$ a.s.-$\PP$ for all $h\in \mathbb{Z}$, the increments of $X$ on $[m,n]$ are measurable-$\FF_{m,n}$ for all integer $m\leq n$, finally $X\circ \rr=X_{-\cdot}$ a.s.-$\PP$. 
\end{definition}
Less precisely, but more succinctly: the increments of an adapted $X$ are measurable relative to $\FF$, but also $X$ commutes with the time-shifts and the reflection of time. 
\begin{remark}\label{rmk:adapted-rws}
For an adapted $X$ automatically $X_0=0$ a.s.-$\PP$ and $\xi:=X_1$ is an $\FF_1$-measurable random variable, symmetric with respect to $(\PP;\FF,\Delta,\rr)$ in the sense that $\xi=-\xi(\rr\circ \Delta_1)$ a.s.-$\PP$, and satisfying $X=(\sum_{k=1}^n \xi( \Delta_{k-1}))_{n\in \ZZ}$ a.s.-$\PP$. Conversely, if an $\FF_1$-measurable random variable $\xi$ is symmetric with respect to $(\PP;\FF,\Delta,\rr)$, then $(\sum_{k=1}^n \xi(\Delta_{k-1}))_{n\in \ZZ}$ is adapted to $(\PP;\FF,\Delta,\rr)$. 
\end{remark}
\begin{corollary}\label{corollary}
Suppose the random walk $X$ is adapted to $(\PP;\FF,\Delta,\rr)$ and that its jumps have diffuse law. Setting, for $n\in \mathbb{N}_0$, $\tau_n^0$ equal to the a.s. unique time of the minimum of $X$ on $[0,n]$ (and $:=0$ on the exceptional set on which uniqueness fails, for instance), then the family $(\tau_n^0)_ {n\in \mathbb{N}_0}$ is symmetric splitting for $(\PP;\FF,\Delta,\rr)$.  
\end{corollary}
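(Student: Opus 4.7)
The plan is to invoke Theorem~\ref{thm:discrete} by verifying its condition~\ref{thm:discrete:b} for the family $\tau^0$. First I would check the basic range/measurability requirements from Section~\ref{discrete}: that $\tau^0_n$ is $[0,n]\cap \ZZ$-valued is by construction, while the $\FF_n$-measurability is inherited from the adaptedness of $X$, which forces the increments of $X$ on $[0,n]$, and thence via $X_0=0$ a.s.-$\PP$ the whole segment $(X_0,\ldots,X_n)$, to be measurable-$\FF_n$. The diffuseness of the jump law is used to secure the a.s. uniqueness of the minimum of $X$ on each bounded interval (the event $\{X_i=X_j\}$ for $i\neq j$ in $\ZZ$ has probability $0$ since $X_j-X_i$ is a finite sum of i.i.d. diffuse increments).

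Next I would translate \eqref{main} into a pathwise statement about $X$. Chaining the adaptedness relations $X\circ \Delta_h=X_{h+\cdot}-X_h$ and $X\circ \rr=X_{-\cdot}$ (both a.s.-$\PP$) yields, for each $m\in \mathbb{N}_0$,
$$X_k\circ \Delta_m=X_{m+k}-X_m,\qquad X_k\circ (\rr\circ \Delta_m)=X_{m-k}-X_m\quad (k\in \ZZ),$$
a.s.-$\PP$. Transcribing what ``$\tau^0=0$'' means on a shifted/reflected path, this identifies
$$\Delta_m^{-1}(\{\tau^0_n=0\})=\{X_m\leq X_{m+k}\text{ for all }k\in [0,n]\cap \ZZ\}$$
and
$$(\rr\circ \Delta_m)^{-1}(\{\tau^0_m=0\})=\{X_m\leq X_j\text{ for all }j\in [0,m]\cap \ZZ\},$$
a.s.-$\PP$, with each inequality being a.s. strict off the endpoint $m$.

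The intersection of these two events therefore coincides, a.s.-$\PP$, with the event that $X_m$ is the unique minimum of $X$ on the whole interval $[0,m+n]\cap \ZZ$, i.e. with $\{\tau^0_{m+n}=m\}$. This is exactly \eqref{main}, and the corollary then follows from Theorem~\ref{thm:discrete}. The only real obstacle is the bookkeeping of the various $\PP$-null sets on which the a.s. identities defining $\tau^0$, $X\circ \Delta_h$, $X\circ \rr$, and uniqueness of the minima are permitted to fail; since only finitely many such sets are in play for any given pair $(m,n)$, their union remains $\PP$-negligible and \eqref{main} does hold as an a.s.-equality.
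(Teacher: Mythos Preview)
Your proposal is correct and follows exactly the approach the paper takes: the paper's proof is the one-liner ``One has only to apply Theorem~\ref{thm:discrete} exploiting the fact that $X$ commutes with time-reflection and the time-shifts,'' and you have simply spelled out in detail how this commutation translates \eqref{main} into the pathwise minimum condition. The only difference is the level of detail---the paper leaves the verification implicit, whereas you carry it out explicitly and also address the null-set bookkeeping.
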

\begin{proof}
One has only to apply Theorem~\ref{thm:discrete} exploiting the fact that $X$ commutes with time-reflection and the time-shifts.
\end{proof}
In particular: 
\begin{example}\label{example:basic-other-families}
Returning to Example~\ref{example:basic} we see that effecting on all the increments of $X$ simultaneously any fixed odd measurable transformation $\kappa:\mathbb{R}\to\mathbb{R}$ and recombining back these increments yields the adapted symmetric random walk $\kappa\cdot X:=(\sum_{k=1}^n\kappa(X_k-X_{k-1}))_{n\in\mathbb{Z}}$. For $\kappa=\mathrm{id}_\mathbb{R}$ of course  $\kappa\cdot X=X$, while for $\kappa=-\mathrm{id}_\mathbb{R}$ we get the dual random walk  $\kappa\cdot X=-X$ (whose minima are the maxima of $X$). It is plain that ``generically'' transformations  $\kappa$ that are not  a multiple of $\mathrm{id}_\mathbb{R}$ will yield many symmetric splitting families other than the times of maxima and minima of $X$. 
\end{example}

Combining Corollaries~\ref{corollary:preclude} and~\ref{corollary} we arrive at

\begin{proposition}\label{proposition:nasc-for-existence-of-symmetric-splitting}
The following are equivalent.
\begin{enumerate}[(a)]
\item\label{nasc.a} $(\PP;\FF,\Delta,\rr)$ admits a symmetric splitting family. 
\item\label{nasc.b} $\PP\vert_{\FF_1}$ is atomless and there exists an $A\in \FF_1$ for which $\mathbbm{1}_A+\mathbbm{1}_{A}(\rr\circ \Delta_1)=\mathbbm{1}_\Omega$ a.s.-$\PP$.
 \item\label{nasc.c} There exists a random walk $X$ adapted to $(\PP;\FF,\Delta,\rr)$, whose jumps have diffuse law.
 \end{enumerate}
\end{proposition}
\begin{proof}
\ref{nasc.a} implies \ref{nasc.b} by Corollary~\ref{corollary:preclude} and because, with $\tau$ symmetric splitting, $A:=\{\tau_1=1\}\in \FF_1$ meets the stipulated condition, indeed, by \eqref{main} say, $(\rr\circ \Delta_1)^{-1}(A)=\{\tau_1=0\}=\Omega\backslash A$ a.s.-$\PP$.

\ref{nasc.b} $\Rightarrow$ \ref{nasc.c}. Since $\PP\vert_{\FF_1}$ is atomless, applying \cite[Theorem~10.52(2)]{aliprantis}, there exists a sequence $(A_n)_{n\in \mathbb{N}}$ in $\FF_1\vert_A$ such that $\PP^+:=\PP\vert_\MM$ is an atomless measure on the countably generated $\sigma$-algebra $\MM:=\sigma_A(\{A_n:n\in \mathbb{N} \})$ of  $A$. Then $\xi^+:=\sum_{n\in \mathbb{N}}3^{-n}\mathbbm{1}_{A_n}$, with domain $A$, generates $\MM$, in particular ${\xi^+}_\star \PP^+$ is diffuse. Setting $\xi:=\mathbbm{1}_A\xi^+-\mathbbm{1}_{(\rr\circ \Delta_1)^{-1}A}\xi^+(\rr\circ \Delta_1)$ we obtain a random variable of $\PP$ with diffuse law and satisfying $\xi=-\xi(\rr\circ \Delta_1)$. From Remark~\ref{rmk:adapted-rws} we deduce existence of a random walk adapted to $(\PP;\FF,\Delta,\rr)$, whose jumps have diffuse law.

From Corollary~\ref{corollary} we see that  \ref{nasc.a} is necessary for \ref{nasc.c}.
\end{proof}

\subsection{Honest splitters}\label{honest}
Let us consider the doubly-indexed family $(\tau_{m,n})_{(m,n)\in \mathbb{Z},m\leq n}$ defined via 
\begin{equation}\label{def-doubly-indexed}
\tau_{m,n}:=m+\tau_{n-m}( \Delta_m),\quad (m,n)\in\mathbb{Z}^2,\, m\leq n.
\end{equation}
Manifestly all the symmetric splitting times of Corollary~\ref{corollary}  have the property of
\begin{definition}\label{definition:honest}
The family $\tau$ is said to be honest if $\tau_{m,n}=\tau_{k,l}$ a.s.-$\PP$ on $\{\tau_{m,n}\in [k,l]\}$ for all $m\leq k\leq l\leq n$ from $\mathbb{Z}$.
\end{definition}
It is immediate from Theorem~\ref{thm:discrete} that 
\begin{equation}\label{eq:condition-for-symmetry-given-honest}
\text{an honest $\tau$ is symmetric splitting iff it verifies \eqref{symmetric-split-eq},}
\end{equation}
 while for a symmetric splitting $\tau$  equiveridical are:
\begin{enumerate}[(a)]
\item the family $\tau$ is honest;
\item \label{equiveridical.b} for all $m\leq n$ from $\mathbb{N}_0$, $\PP$-a.s., $\{\tau_n=0\}\subset \{\tau_m=0\}$.
\end{enumerate}
It is not known to the author whether there exist  dishonest, as it were,  symmetric splitting times. But the structure of honest symmetric splitting families is particularly simple, indeed each corresponds to precisely one stopping time of $(\FF_n)_{n\in \mathbb{N}}$ that is self-dual in the sense of

\begin{definition}\label{definition:self-dual}
For a stopping time $\gamma$ in the filtration $(\FF_n)_{n\in \mathbb{N}}$ (with values in $\mathbb{N}\cup \{\infty\}$) introduce
\begin{align*}
\gamma^0&:=0; \text{ then inductively the ``$\gamma$-iterates''}\\
\gamma^{n+1}&:=\gamma^n+\mathbbm{1}_{\{\gamma^n<\infty\}}\gamma(\Delta_{\gamma^n}),\quad n\in \mathbb{N}_0;\\
\mathcal{R}\llbracket\gamma\rrbracket&:=\{\gamma^n:n\in \mathbb{N}_0\}\cap [0,\infty) ;\\
 \llbracket\gamma\rrbracket_n&:=\max ([0,n]\cap \mathcal{R}\llbracket\gamma\rrbracket),\quad n\in \mathbb{N}_0;\quad \llbracket\gamma\rrbracket:=(\llbracket\gamma\rrbracket_n)_{n\in \mathbb{N}_0}.
\end{align*}
Call such a $\gamma$ self-dual for $(\PP;\FF,\Delta,\rr)$  if for all $n\in \mathbb{N}_0$, $\PP$-a.s.,
\begin{equation}\label{self-dual}
\{n\in \mathcal{R}\llbracket\gamma\rrbracket\}=(\rr\circ \Delta_n)^{-1}\{n<\gamma\}.
 \end{equation}
\end{definition}
Compare with \cite[Definition~1]{KENNEDY}: unraveling the notation, it is seen quickly that, within the confines of Example~\ref{ex.increments} (which  is not a severe restriction), self-duality of a $\gamma$ is exactly (modulo only the a.s. qualifiers in our case and trivial transpositions) the duality of $\gamma$ with $\hat\gamma:=\gamma\circ \hat\iota$ in the sense of  \cite[Definition~1]{KENNEDY}; here $\hat\iota:=( E^\mathbb{Z}\ni \theta\mapsto (\ZZ\ni k\mapsto \iota(\theta(k))))$ is the component-wise action of the involutive map $\iota$. 

It is also elementary to check that, in the context of Definition~\ref{definition:self-dual},
\begin{enumerate}[(A)]
\item \label{A} self-duality \eqref{self-dual}  of $\gamma$ is equivalent to $\{\llbracket\gamma\rrbracket_n=0\}=(\rr\circ \Delta_n)^{-1}\{\llbracket\gamma\rrbracket_n=n\}$ holding true a.s.-$\PP$ for all $n\in \mathbb{N}_0$;
\item\label{B} the set $\mathcal{R}\llbracket\gamma\rrbracket$ is regenerative in the sense that $\mathcal{R}\llbracket\gamma\rrbracket\cap [n,\infty)=n+\mathcal{R}\llbracket\gamma\rrbracket(\Delta_n)$ on $\{n\in \mathcal{R}\llbracket\gamma\rrbracket\}$ for all $n\in \mathbb{N}_0$;
\item\label{C} $\mathcal{R}\llbracket\gamma\rrbracket=\{n\in \mathbb{N}_0:\llbracket\gamma\rrbracket_n=n\} $ and $\gamma=\inf  (\mathcal{R}\llbracket\gamma\rrbracket \cap (0,\infty))$.
\end{enumerate}

As for the announced characterization of honest symmetric splitters via self-dual stopping times, it is the content of
\begin{proposition}\label{proposition:self-dual}
There is up to $\PP$-a.s. equality a one-to-one and onto correspondence between honest symmetric splitting families $\tau$ and self-dual stopping times $\gamma$ for $(\PP;\FF,\Delta,\rr)$ that is given by the mutually inverse associations
\begin{equation*}
\begin{cases}
\gamma\rightsquigarrow \llbracket\gamma\rrbracket\text{ and} \\
\tau\rightsquigarrow \llparenthesis \tau\rrparenthesis:=\inf \{n\in \mathbb{N}:\tau_n=n\}
\end{cases}.
\end{equation*}
\end{proposition}
 In particular, specializing to 
Example~\ref{example:basic}, the fact that $\tau^0$ is symmetric splitting for $X$ corresponds to the first descending and ascending ladder times, respectively $$\gamma^0:=\min \{n\in \mathbb{N}:X_n<0\}\text{ and }\widehat{\gamma^0}:=\min \{n\in \mathbb{N}:X_n>0\},$$ being in duality, i.e. to $\gamma^0$ being self-dual.
\begin{proof}
First we argue that for a symmetric splitting $\tau$, $\llparenthesis \tau\rrparenthesis$ is self-dual and $\llbracket\llparenthesis \tau\rrparenthesis\rrbracket= \tau$ a.s.-$\PP$. The honesty property of $\tau$ implies in fact that $\PP$-a.s. $\tau=(\mathbb{N}_0\ni n\mapsto \tau_n)$ is a map that lies below the diagonal of $\mathbb{N}_0\times \mathbb{N}_0$, vanishes at zero and is constant to the right except for jumps to the diagonal corresponding to the regenerative (in the sense of \ref{B}) set of times $\{n\in \mathbb{N}_0:\tau_n=n\}$ that coincides with  $\mathcal{R}\llbracket\llparenthesis \tau\rrparenthesis\rrbracket$ a.s.-$\PP$. It is then evident that for each $n\in \mathbb{N}_0$, $\PP$-a.s., $$\tau_n=\max ([0,n]\cap \mathcal{R}\llbracket\llparenthesis \tau\rrparenthesis\rrbracket),$$ i.e. $\llbracket\llparenthesis \tau\rrparenthesis\rrbracket=\tau$ a.s.-$\PP$. Besides, the self-duality property of $\llparenthesis \tau\rrparenthesis$ becomes, in view of \ref{A}, just the requirement that $\{\tau_n=0\}=\{\tau_n(\rr \circ \Delta_n)=n\}$ a.s.-$\PP$ for all $n\in \mathbb{N}_0$, which is a special case of \eqref{symmetric-split-eq}. 

Conversely, let us establish that for a self-dual $\gamma$, $\llbracket\gamma\rrbracket$ is an honest symmetric splitting family (from \ref{C} it already follows that $\llparenthesis\llbracket\gamma\rrbracket\rrparenthesis=\gamma$). Let then $\{m,n\}\subset \mathbb{N}_0$. On $\{\llbracket\gamma\rrbracket_{m+n}=m\}$ we have ($m\in \mathcal{R}\llbracket\gamma\rrbracket$ and hence) $\llbracket\gamma\rrbracket_m=m$, thus $\PP$-a.s. $\llbracket\gamma\rrbracket_m(\rr \circ \Delta_m)=0$ due to the self-duality \ref{A}, while by the regenerative property \ref{B} it is also clear that $\llbracket\gamma\rrbracket_n(\Delta_m)=0$.  This gives the ``$\subset$'' inclusion of \eqref{main} (for $\tau=\llbracket\gamma\rrbracket$). On the other hand, on $\{\llbracket\gamma\rrbracket_m(\rr \circ \Delta_m)=0\}$ we must have $\llbracket\gamma\rrbracket_{m+n}\geq m$ a.s.-$\PP$: if, per absurdum, for some $k\in \{0,\ldots,m-1\}$, $\llbracket\gamma\rrbracket_{m+n}=k$, then, $\PP$-a.s., by the inclusion just argued we have   $\llbracket\gamma\rrbracket_{m+n-k}(\Delta_k)=0$, a fortiori $\llbracket\gamma\rrbracket_{m-k}(\Delta_k)=0$, hence by self-duality \ref{A} $\llbracket\gamma\rrbracket_{m-k}(\rr\circ  \Delta_m)=m-k>0$, therefore $\llbracket\gamma\rrbracket_m(\rr\circ  \Delta_m)>0$, which cannot stand. Similarly, on $\{\llbracket\gamma\rrbracket_n(\Delta_m)=0\}$ we must have $\llbracket\gamma\rrbracket_{m+n}\leq m$ a.s.-$\PP$: if, per absurdum, for some $k\in \{m+1,\ldots,m+n\}$, $\llbracket\gamma\rrbracket_{m+n}=k$, then, $\PP$-a.s.,  $\llbracket\gamma\rrbracket_{k}(\rr \circ \Delta_k)=0$, a fortiori $\llbracket\gamma\rrbracket_{k-m}(\rr \circ \Delta_k)=0$, so by self-duality \ref{A} $\llbracket\gamma\rrbracket_{k-m}(\Delta_{m})=k-m>0$, therefore $\llbracket\gamma\rrbracket_n(\Delta_m)>0$, which again  cannot be.  Altogether we have then also the ``$\supset$'' inclusion of \eqref{main} (for $\tau=\llbracket\gamma\rrbracket$). By Theorem~\ref{thm:discrete} we deduce that $\llbracket\gamma\rrbracket$ is symmetric splitting. It is also honest, since \ref{equiveridical.b} of p.~\pageref{equiveridical.b} holds evidently.
\end{proof}
Thanks to Proposition~\ref{proposition:self-dual} and the comments just preceding it, the results of \cite{greenwood-shaked,KENNEDY} for sure apply to all honest symmetric splitting times of symmetric real-valued random walks (recall Examples~\ref{example:basic}-\ref{ex.increments}). Especially, one has for them a generalized Spitzer-Pollaczek factorization \cite[Eq. (7)]{greenwood-shaked} \cite[Eqs.~(4)-(5)]{KENNEDY}.

In closing this (sub)section we provide a constructive exhaustion\footnote{Due to Jon Warren (private communication).} of honest symmetric splitting times, which may be compared with \cite[Corollary~2]{KENNEDY} for the case of stopping times that are in duality. 

  First, set $\tau^J_{0}:=0$. Inductively, given an $n\in \mathbb{N}_0$, suppose the  $\tau^J_m$ have been defined for all $m\in [0,n]\cap  \ZZ$ and that for all $k\leq  l$ from $[0,n]\cap \ZZ$, $\tau^J_{n}=\tau^J_{k,l}:=k+\tau^J_{k-l}(\Delta_l)$ a.s.-$\PP$ on $\{\tau^J_{n}\in [k,l]\}$, but also, for all $m\in  [0,n]\cap  \ZZ$, $\tau^J_m+\tau^J_m(\rr\circ \Delta_m)=m$ a.s.-$\PP$ (recall \eqref{eq:condition-for-symmetry-given-honest}). Then set (there may be ambiguity on a $\PP$-negligible set, which does not matter)
$$\tau^J_{n+1}:=
\begin{cases}
\tau^J_{1,n+1}&\text{ on }\{\tau^J_n\in [n]\}\\
\tau^J_{n}&\text{ on }\{\tau^J_{1,n+1}\in [n]\}\\
n+1&\text{ on }J_{n+1}\\
0&\text{ on }I_{n+1}\backslash J_{n+1}\\
\end{cases}
$$
for
$J_{n+1}\subset I_{n+1}:=\{ \tau^J_n=0\}\cap \{\tau^J_{1,n+1}=n+1\}$ an arbitrary element of $\FF_{n+1}$ satisfying the property that $J_{n+1}$ and $(\rr \circ \Delta_{n+1})^{-1}J_{n+1}$ are disjoint and their union is $I_{n+1}$ a.s.-$\PP$. It is clear that such a construction precisely exhausts in the $\tau^J$ all the honest symmetric splitting families. 
Notice that, at the ``$n$-th'' inductive step, $n\in \mathbb{N}_0$, the set $I_{n+1}$ satisfies  $(\rr \circ \Delta_{n+1})^{-1}I_{n+1}=I_{n+1}$ a.s.-$\PP$, so that existence of some $J_{n+1}$ is never an issue as long as there exists at least one $A_{n+1}\in \FF_{n+1}$ for which $\Omega$ is $\PP$-a.s. the disjoint union of $A_{n+1}$ and of $(\rr \circ \Delta_{n+1})^{-1}A_{n+1}$ (for we may then  take $J_{n+1}:=I_{n+1}\cap A_{n+1}$ irrespective of the choices hitherto made). The existence of such $A_m$ \emph{for all} $m\in \mathbb{N}$ is equivalent to the existence of a symmetric splitting family, which follows from Proposition~\ref{proposition:nasc-for-existence-of-symmetric-splitting} and the observation that  if $X$ is adapted to $(\PP;\FF,\Delta,\rr)$ and its jumps have diffuse law, then we may plainly take $A_{n+1}=\{X_1+\cdots +X_{n+1}<0\}$ for all $n\in \mathbb{N}_0$ (in fact, putting in such case  $J_{n+1}=I_{n+1}\cap A_{n+1}$ at each step $n\in \mathbb{N}_0$ of the induction simply recovers in $\tau^J$ the minima $\tau^0$ of $X$ from Corollary~\ref{corollary}).

\section{Remarks on Brownian motion}\label{section:bm}
Let $\Omega$ be the space of continuous real maps on $\mathbb{R}$ vanishing at zero and denote the canonical process on $\Omega$ by $B$. Endow $\Omega$ with the $\sigma$-algebra $\GG$ generated by $B$ and the two-sided Wiener measure $\mathbb{W}$ thereon. For real $s\leq t$ denote by $\FF_{s,t}$ the $\sigma$-algebra generated by the increments of $B$ on $[s,t]$ and the $\mathbb{W}$-trivial sets. The time-shifts $\Delta=(\Delta_h)_{h\in \mathbb{R}}$ and the reflection of time $\rr$ are defined in the natural way: $\Delta_h:=(\Omega\ni \omega\mapsto \omega(h+\cdot)-\omega(h))$ for $h\in \mathbb{R}$ and $\rr:=(\Omega\ni \omega\mapsto \omega(-\cdot))$. It is plain that the structure $(\WW;\FF,\Delta,\rr)$ satisfies the obvious analogs of the properties \eqref{properties:i}-\eqref{properties:v} from the discrete setting. In particular, $(\WW;\FF,\Delta)$ constitutes a one-dimensional noise in the sense of Tsirelson, the classical Wiener noise.

Then, the problem is to explore the space of measurable processes $\tau=(\tau_t)_{t\in (0,\infty)}$ that are adapted: $\tau_t\in (0,t)$ a.s.-$\WW$ and $\tau_t$ is $\FF_{0,t}$-measurable for each $t\in (0,\infty)$; and which have the symmetric splitting property: for each $\lambda\in (0,\infty)$, with $\ee$ an exponential random time of rate $\lambda$ independent of $\GG$, relative to the $\sigma$-field $\FF_\infty^0\otimes \BB_{(0,\infty)}$, the pair $(\rr\circ \Delta_{\tau_\ee},\tau_\ee)$  is independent of and has the same distribution as the pair $(\Delta_{\tau_\ee},\ee-\tau_\ee)$. Here $\FF_\infty^0:=\sigma(B\vert_{[0,\infty)})$ (no completion!). We see that it is exactly parallel to the discrete setting, except that we cannot work with $\FF_\infty:=\lor_{t\in [0,\infty)}\FF_{0,t}$ directly; the notion of the ``future information'' $\FF^0_\infty$ is a separate extra ingredient (albeit one satisfying $ \FF_\infty=\FF^0_\infty\lor\PP^{-1}(\{0,1\})$). The reason for this is as follows: if $\tau^0$ is the family of the minima of $B$, then  for $a\in (0,\infty)$, the event $E_a:=\{B\text{ is strictly positive on $[-a,a]\backslash \{0\}$}\}$ has $\WW$-probability zero (so belongs to $\FF_\infty$), but $\{\rr\circ \Delta_{\tau^0_\ee}\in E_a\}=\{\Delta_{\tau^0_\ee}\in E_a\}$ is non-trivial (so not independent of itself). Note this issue does not arise in the discrete setting of Section~\ref{discrete} at all, simply because there ${\Delta_\rho}_\star\PP\ll \PP$ for all $\ZZ$-valued random times $\rho$. Of course the reader may complain that we should not have completed the $\FF_{s,t}$ to begin with, however the latter is more natural from the point of view of the theory of noises. 

In any event, with $\FF_\infty^0$ in lieu of $\FF_\infty$, the notion appears to be well-posed. In view of the results of the discrete setting it is particularly interesting, because there are no jumps of Brownian motion on which an odd transformation could be effected. There are moreover no symmetric L\'evy processes adapted to the Wiener noise other than multiples of the underlying Brownian motion itself  \cite[p.~184]{tsirelson-nonclassical}. It begs therefore the question: are there any adapted symmetric splitting families for $B$ at all, apart from  the maxima and the minima? If there are, are there any that have the added property of honesty/nestedness (cf. \eqref{def-doubly-indexed} and Definition~\ref{definition:honest}): $\tau_{s,t}:=s+\tau_{t-s}(\Delta_{s})=\tau_{u,v}$ a.s.-$\WW$ on $\{\tau_{s,t}\in (u,v)\}$ for all real $s\leq u< v\leq t$?

To indicate some of the subtleties involved, consider only the narrower problem of nested families. The resulting doubly-indexed structure $(\tau_{s,t})_{s<t}$ has been previously studied in \cite[Section~5]{vidmar-warren},  were it was called an honest indexation \cite[Definition~5.1]{vidmar-warren} (the emphasis there being on the range $M:=\{\tau_{s,t}:(s,t)\in \mathbb{Q}^2,\, s<t\}$, which does not matter here). It was found  \cite[p.~1101]{vidmar-warren} that if $\tau$ is also symmetric, in that $\tau_t+\tau_t( \rr\circ \Delta_t)=t$ a.s.-$\WW$ for all $t\in (0,\infty)$, then $\tau$ is symmetric splitting (cf. \eqref{eq:condition-for-symmetry-given-honest}) and the question was raised \cite[Question~5.8]{vidmar-warren} whether in addition to the minima  $\tau^0$ and the  maxima $\tau^0(-B)$ there are any other symmetric honest indexations. But even in this (a priori) more restricted setting, attempting to resolve the question appears quite non-trivial. To give (further) credence to this, let us conclude by reporting, slightly informally, and without including all the detail, on one failed strategy that was aimed at showing that the answer is to the negative. 

Just like in the discrete case, it is concluded immediately that for  any symmetric honest indexation the law of $\tau_t$ is uniquely determined for each $t\in (0,\infty)$. Exploiting the fact that a local time $L$ may be associated to the set $D:=\{t\in (0,\infty):\tau_t=t\}=\{\tau_t:t\in (0,\infty)\}$ (after a suitable regularization of $\tau$)  \cite[Proposition~5.12]{vidmar-warren}, the range of $\tau$ is seen to be a.s. the closure of the range of the subordinator $L^{-1}$ (:= the right-continuous inverse of $L$). This means that the law of $L^{-1}$ is uniquely determined (up to a temporal scaling) \cite[Lemma~1.11]{Bertoin1999} and hence in turn so is that of $(\tau_t)_{t\in (0,\infty)}$ as   a process. If it could be shown that moreover the law of the entire doubly-indexed family $(\tau_{s,t})$ is uniquely determined, then we could construct an endomorphism of $\WW$ into itself by sending $(\tau^0_{s,t})$ onto $(\tau_{s,t})$ and this endomorphism would preserve the noise structure of $(\WW;\FF,\Delta)$. We would have an endomorphism of the Wiener noise. Since the latter admits no non-trivial subonoises, the endomorphism in question would in fact be an automorphism and could thus only be the identity or minus the identity on $\Omega$, concluding the argument. 

But, in ``\emph{If it could be shown /\ldots/}'' lies the rub. Indeed, adding to the Brownian motion $B$ an independent non-zero  symmetric compound Poisson process, call the resulting L\'evy process $X$, we may take for $\tau^1$ the minima of $X$, while $\tau^0$  continue to be the minima of $B$. After making the suitable notions precise it is morally clear (it would ultimately have to be proved, of course) that $\tau^1$ and $\tau^0$ are both symmetric splitting for $X$, and $(\tau^1_t)_{t\in (0,\infty)}$ has the same law as $(\tau^0_t)_{t\in (0,\infty)}$, however $(\tau^1_{s,t})$ as a doubly-indexed process does not have the same law as $(\tau^0_{s,t})$ (just because, in the obvious notation, for $s\in (0,\infty)$, the coupling time $\inf\{t\in (s,\infty):\tau^0_t=\tau^0_{s,t}\}-s$ is a.s. the first time that the local time $L^0(\Delta_s)$ accrues to $L^0_s(\rr\circ \Delta_s)$, while this fails for $\tau^1$). Thus, if in fact in the case of $B$ only the maxima and minima are symmetric splitting, then it must be some property particular to Brownian motion (over and above it being a L\'evy process) that necessitates that the law of the doubly-indexed process $(\tau_{s,t})$ is uniquely determined by the requirement that the family be symmetric splitting. It is altogether not clear to the author what this property might be.

\bibliography{Biblio_noise}
\bibliographystyle{plain}

\end{document}